\newtheorem{theorem}{Theorem}[section]
\newtheorem{proposition}[theorem]{Proposition}
\theoremstyle{definition}
\theoremstyle{remark}
\newtheorem{remark}[theorem]{Remark}
\numberwithin{equation}{section}
\newcommand{\Z}{\mathbb{Z}}
\newcommand{\N}{\mathbb{N}}
\newcommand{\abs}[1]{\lvert#1\rvert}
\begin{document}

\title{Well-balanced and asymptotic preserving schemes for kinetic models}

\author{Casimir EMAKO}
\address{Sorbonne Universit\'es, UPMC Univ Paris 06, UMR 7598, Laboratoire Jacques-Louis Lions, F-75005, Paris, France\\
CNRS, UMR 7598, Laboratoire Jacques-Louis Lions, F-75005, Paris, France \\
INRIA-Paris-Rocquencourt, EPC MAMBA, Domaine de Voluceau, BP105, 78153 Le Chesnay Cedex}
\email{emako@ann.jussieu.fr}

\author{Min Tang}
\address{Institute of natural sciences and department of mathematics, Shanghai Jiao Tong University, Shanghai, 200240,China}
\email{tangmin@sjtu.edu.cn}

\subjclass[2010]{65M08,35Q20,35Q92}

\date{\today}

\keywords{Kinetic models, well-balanced, asymptotic preserving}

\begin{abstract}
In this paper, we propose a general framework for designing numerical schemes that have both well-balanced (WB) and asymptotic preserving (AP) properties,
for various kinds of kinetic models. We are interested in two different parameter regimes, 1)  When the ratio between the mean free path and the characteristic macroscopic length $\varepsilon$ tends to zero, the density can be described by (advection) diffusion type (linear or nonlinear) macroscopic models; 2)
When $\varepsilon=O(1)$, the models behave like hyperbolic equations with source terms and we are interested in their steady states. 
We apply the framework to three different kinetic models: neutron transport equation and its diffusion limit, the transport equation for chemotaxis and its Keller-Segel limit,
and grey radiative transfer equation and its nonlinear 
diffusion limit. Numerical examples are given to demonstrate the properties of the schemes. 

\end{abstract}

\maketitle

\section{Introduction}
Transport equations are important since they arise in many important applications, ranging from neutron transport, radiative transfer, semiconductor device simulation to E.coli chemotaxis. These models describe particles that travel freely for a certain distance and then change their directions by 
scattering, interacting with background media or tumbling. The mean free path (the average distance a particle travels between two successive velocity changes) is an important
parameter, when it is small compared to the typical length scales, various macroscopic model can be derived asymptotically or analytically \cite{CMPS,LMM,OthmerHill}.

Let the dimensionless parameter $\varepsilon$ denote the ratio of the mean free path and the typical length scale.
Numerical solutions to the transport equation are challenging when $\varepsilon$ is small, since it requires the numerical resolution of the small scale.
To develop a multi-scale scheme whose stability and convergence are independent of $\varepsilon$ refers to the asymptotic preserving (AP) property.
When $\varepsilon=O(1)$, the models behave like hyperbolic equations with source terms.
When the
source terms in the system become stiff, the usual numerical methods may give
poor approximations to the steady state solutions \cite{BPV,Leveque}. To maintain the steady states or to achieve them in the long time limit with an acceptable level of accuracy refers to the well balanced (WB) property.

In this work, we present a general framework to build schemes that have both properties. 
Two different kinetic models are considered to illustrate the idea of our proposed framework

\bigskip
\textbf{Transport equation for Chemotaxis and its Keller-Segel limit}\\
Bacteria undergo run and tumble process as mentioned in \cite{Berg,VCJS, Saragosti}. During the run phase, bacteria move along a straight line and change their 
directions during the tumble phase. This individual motion is called the velocity jump process and modeled by the so-called Othmer-Dunbar-Alt model \cite{Alt,OthmerAlt} which reads 
\begin{equation}\label{eq_chemotaxis_eps}
\left\{
 \begin{aligned}
  & \partial_t f^{\varepsilon}+\frac{v}{\varepsilon}\cdot \nabla_x f^{\varepsilon}=\frac{1}{\varepsilon^2}\left(\frac{1}{|V|}\int_{V} \big(1+\varepsilon \phi(v'\cdot\nabla_x S)\big)f^{\varepsilon}(v')dv'- \big(1+\varepsilon \phi(v\cdot\nabla_x S)\big)f^{\varepsilon}(v)\right),\\
  & \partial_t S^{\varepsilon}-D\Delta S^{\varepsilon}+\alpha S^{\varepsilon}=\beta \rho^{\varepsilon}(x,t):=\frac{1}{|V|}\int_{V} f^{\varepsilon}dv.
 \end{aligned}
 \right.
\end{equation}
Here $f^{\varepsilon}(x,v,t)$ is the amount of cells with velocity $v\in V$ ($V$ is a sphere) at position $x$ and time $t>0$, $\phi(u)$ is a decreasing function in $u$,
$S^{\varepsilon}(x,t)$ is the concentration of the chemical substance.
The parameters $D,\alpha,\beta$ are positive constants and $\varepsilon$ is the ratio of the average run distance between two successive tumbles and the characteristic macroscopic length.

When $\epsilon\to 0$, the solution of \eqref{eq_chemotaxis_eps} $f^{\varepsilon}(x,v,t)$ tends to $\rho^0(x,t)$,
which is independent of $v$ and satisfies the following Keller-Segel type equation \cite{CMPS,Almeida,HKS,Keller,OthmerHill}~:
\begin{equation}\label{limit_eq}
\left\{
\begin{aligned}
 & \partial_t \rho=\frac{1}{3}\Delta\rho +\nabla\left(\Big(\frac{1}{|V|}\int_{V} v\phi(v\partial_x S)dv \Big)\rho \right),\\
 & \partial_t S-D \Delta S+\alpha S=\beta \rho.
\end{aligned}
\right.
\end{equation}
Many numerical schemes have been proposed to study \eqref{eq_chemotaxis_eps} \cite{Bokai,FilbetY,Gosse1}. 

\bigskip
\textbf{Gray radiative transfer equation and its nonlinear diffusion limit:}\\
The gray radiative transfer equation concerns photon transport and its interaction with the 
background material. It has wide application in astrophysics and inertial confinement fusion.
The system for the radiative intensity $I$ and the material temperature is
\begin{equation}
\left\{
\begin{aligned}
& \frac{1}{c} \partial_t I+\frac{1}{\varepsilon} v\cdot \nabla_x I=
\frac{\sigma}{\varepsilon^2}\left(\frac{1}{|V|}acT^4-I\right)+q(v),\\
&  C_v\partial_t T=\frac{\sigma}{\varepsilon^2}(\rho-acT^4),
\end{aligned}
\right.
\label{radiative_transport}
\end{equation}
where $\rho:=\int_{V}I dv$, $\sigma(x,T)$ is the opacity and $a$, $c$, $C_v$ are positive constants
represent the radiation constant, light speed and heat capacity respectively.
Similar to the chemotaxis case, when $\varepsilon$ goes to zero, the radiative intensity $I$ approaches
a Planckian at the local temperature $\frac{1}{\abs{V}}acT^4$ and the  
photon temperature satisfies the following nonlinear diffusion equation: 
\begin{equation}\label{eq:limit_T}
a \partial_t T^4+C_v \partial_t T=\nabla_x (\frac{ac}{3\sigma}\nabla_x T^4)+\int_Vq\, dv.
\end{equation}
It is important to preserve the steady state temperature distribution when $t\to\infty$. 
\bigskip

 A scheme for such problems is AP if it preserves the limiting equation \eqref{limit_eq} or \eqref{eq:limit_T} when $\varepsilon\to 0$ at the discrete level.
 AP schemes were first designed for the neutron transport equation to use unresolved cells to capture the macroscopic diffusion limit model \cite{LM,LMM}. It has been successfully extended to a lot of applications, we refer to the review paper \cite{SJin} for more discussions. 
In the literature, many papers have been devoted to build AP schemes for diffusion limit of the transport equations, for example \cite{Adams,Bokai,Ribot,Mieussens}.

WB schemes are developed for hyperbolic equations with source terms. They 
have been proposed for various applications, including the Saint-Venant system with a source term, 
the non-isothermal nozzle flow equations, etc.
 By balancing the numerical flux with the
source term, WB schemes can capture the steady state solutions exactly
or with at least a second order accuracy \cite{JW}.
A very limited list of such references includes but not limits to quasi-steady scheme \cite{Leveque}, kinetic schemes \cite{PS,Xu1}, central schemes \cite{Kurganov1}, interface scheme \cite{JW}, and other references there in
\cite{BAMN}.

Recently designing schemes that are either AP or WB, or have both properties attracts a lot of interests, both for kinetic chemotaxi model \cite{Gosse1,Gosse, Natalini} and grey radiative
transfer equation \cite{Gosse2,Sun}. 
Generally, it is very hard to have a scheme that have both properties and this is the goal of our present paper.

The paper is organized as follows: In section 2, the scheme framework composed of two steps: prediction step and steady problem step, is described and we show that the
AP and WB properties can be achieved.
 Section 3 and 4 are respectively devoted to the construction of 
AP and WB schemes for the transport equation for chemotaxis and the gray radiative transfer equation. In the prediction step, we first extend the unified gas approach for neutron transport equation \cite{Mieussens} to
the chemotaxis model \eqref{eq_chemotaxis_eps} and construct a new AP scheme for E.coli chemotaxis, while, for the gray radiative transfer equation, the scheme in 
\cite{Sun} is employed. Then, in the steady problem step, we use the numerical results
obtained by an AP scheme for the steady state equation of \eqref{eq_chemotaxis_eps} or \eqref{radiative_transport} to modify the numerical flux.  
The performances of the proposed schemes are presented in section 5 and we conclude with some discussion in section 6.

\section{The scheme framework and its WB and AP properties}
In this part, we introduce a general framework of designing WB and AP schemes, while the details of the discretization are given in section 3 and section 4. 
In the subsequent part of the paper, we consider the one dimensional problem and use a uniform grid with
 $$x_i=i\Delta x,\quad i\in \Z,\qquad t^n=n\Delta t,\quad n\in \N.$$
Extensions to the two dimensional case are straightforward.

To illustrate the idea, we consider the following simplest one dimensional neutron transport equation
\begin{equation}\label{eq:neutrontrans}
 \partial_t f+\frac{1}{\varepsilon} v \partial_x f=
\frac{\sigma_T}{\varepsilon^2}\left(\rho-f\right)-\sigma_a\rho+q,
\end{equation}with $\rho:=\frac{1}{2}\int_{-1}^1f dv$.
When $\varepsilon\to 0$, the solution of the above equation tends to $\rho_0$ which satisfies the following
diffusion equation
$$
\partial_t\rho_0-\partial_x\big(\frac{1}{3\sigma_T}\partial_x\rho_0\big)+\sigma_a\rho_0=q.
$$
The formal derivation is standard by substituting the Chapman-Enskog expansion
$$
f=f^{(0)}+\varepsilon f^{(1)}+\varepsilon^2f^{(2)}+\cdots
$$into \eqref{eq:neutrontrans}. When $\varepsilon$ is small, the solution to \eqref{eq:neutrontrans}
$f$ can be approximated by
\begin{equation}
f=\rho^{(0)}-\frac{\varepsilon}{\sigma_T}v\partial_x\rho^{(0)}+O(\varepsilon^2).
\end{equation}

 We are interested in $\varepsilon$ ranging from $O(1)$ to very small,
the numerical scheme writes:
\begin{equation}\label{numerical_scheme}
\begin{aligned}
 & \frac{f_i^{n+1}-f_i^n}{\Delta t}+\frac{v}{\varepsilon \Delta x}\left((1-\tilde{\alpha})f_i^n+\tilde{\alpha} f^{n+1}_i-\hat{f}_{i-1/2}^{n}\right)=0,\quad v>0,\\
 & \frac{f_i^{n+1}-f_i^n}{\Delta t}+\frac{v}{\varepsilon \Delta x}\left(\hat{f}^n_{i+1/2}-\left((1-\tilde{\alpha})f_i^n+\tilde{\alpha} f_i^{n+1}\right)\right)=0,\quad v<0,
\end{aligned}
\end{equation}
where $f_i^n$ are the approximation of the average $f(x,t^n)$ in the interval $(x_{i-1/2},x_{i+1/2})=(x_i-\Delta x/2,x_i+\Delta x/2)$ and 
\begin{equation}\tilde{\alpha}=\min\left(1,\frac{\Delta t}{\varepsilon}\right).\label{eq:tildealpha}\end{equation}  The determination of $\hat{f}^n_{i+1/2}$ consists of two steps: the prediction step and the steady problem step.

\bigskip
\textbf{Prediction step: }
The prediction step is to evolve the equation \eqref{eq:neutrontrans} for one time step by an AP scheme.
Starting from $f_i^n$ obtained from the $n$th iteration, the predictions $\tilde{f}_i^{n+1}$ and 
$\tilde{\rho}_i^{n+1}=\frac{1}{2}\int_{-1}^1f_i^{n+1}dv$ can be found by any scheme that is AP.  The requirement that,
when $\varepsilon\to 0$, the scheme preserves the diffusion limit at the discrete level can only be achieved when 
\begin{equation}\label{eq:tildefrho}\tilde{f}_i^{n+1}\approx \tilde{\rho}_i^{n+1}+O(\varepsilon)\end{equation} and $\tilde{\rho}_i^n$ is a discretization of the limiting diffusion equation. Here in this paper we use the unified gas approach developed in \cite{Mieussens,Xu,Sun} for various kinetic models.

\bigskip
\textbf{Steady problem step: }
The second step is devoted to the computation of the steady state problem. 
 On each cell $[x_i,x_{i+1}]$, we solve the following stationary problem:
 \begin{equation} v\cdot \partial_x \hat{f}=
\frac{\sigma_T}{\varepsilon}\left(\hat{\rho}-\hat{f}\right)-\varepsilon\sigma_a\hat{\rho}+\varepsilon q
\label{eq:hatf}
\end{equation}
together with inflow boundary conditions 
\begin{equation}\label{eq:stableinflow}
\begin{cases}
&\hat{f}(x_i,v)=(1-\tilde{\alpha})f_i^n+\tilde{\alpha} \tilde{f}_i^{n+1},\quad v>0,\\
&\hat{f}(x_{i+1},v)=(1-\tilde{\alpha})f^n_{i+1}+\tilde{\alpha} \tilde{f}_{i+1}^{n+1},\quad  v<0.
\end{cases}
\end{equation}
Here $\tilde{f}_i^{n+1}$ is determined by the prediction step. Then $\hat{f}^n_{i+1/2}$ in \eqref{numerical_scheme} is given by
the outflow of the above steady state problem in each cell such that \begin{equation}
  \hat{f}^n_{i+1/2}(v)=\hat{f}(x_{i+1},v),\quad  v>0\qquad
 \hat{f}^n_{i+1/2}(v)=\hat{f}(x_{i},v),\quad   v<0.
\label{flux_steady_state}
\end{equation}
\bigskip

First of all, when $\epsilon$ is at $O(1)$ or $\Delta x,\Delta t\ll\varepsilon$, i.e. $\tilde{\alpha}=\Delta t/\varepsilon$, we show that \eqref{numerical_scheme} is a consist discretization for \eqref{eq:neutrontrans}.
We only consider the case when $v>0$. Since $\hat{f}^n_{i-1/2}$ is obtained from the steady problem step, it can be approximated by 
$$\begin{aligned}
&(1-\Delta t/\varepsilon)f_{i-1}^n+\Delta t/\varepsilon\tilde{f}^{n+1}_{i-1}+\Delta x\partial_x\hat{f}^n(x_{i-1},v)\\=
&(1-\Delta t/\varepsilon)f_{i-1}^n+\Delta t/\varepsilon\tilde{f}^{n+1}_{i-1}+\frac{\Delta x}{v}\Big(\frac{\sigma_T}{\varepsilon}\left(\hat{\rho}^n_{i-1}-\hat{f}^n_{i-1}\right)-\varepsilon\sigma_a\hat{\rho}^n_{i-1}+\varepsilon q\Big)\\
=&f_{i-1}^n+\frac{\Delta x}{v}\Big(\frac{\sigma_T}{\varepsilon}\left(\rho^n_{i-1}-f^n_{i-1}\right)-\varepsilon\sigma_a\rho^n_{i-1}+\varepsilon q\Big)+O(\Delta t^2)+\frac{\Delta x}{v}O(\Delta x).
\end{aligned}
$$
From the CFL condition, $\Delta t<\frac{\epsilon }{\max|v|}\Delta x$, \eqref{numerical_scheme} can then be approximated by 
$$
 \frac{f_i^{n+1}-f_i^n}{\Delta t}+\frac{v}{\varepsilon \Delta x}\left(f_i^n-f_{i-1}^{n}\right)=
 \frac{\sigma_T}{\varepsilon^2}\left(\rho_{i-1}^n-f_{i-1}^n\right)-\sigma_a\rho_{i-1}^n+q+O(\Delta x).
$$
The derivation for $v<0$ is the same.
\bigskip

Moreover, we have the following proposition:
\begin{proposition}
When AP schemes are used in the prediction step and steady problem, the numerical scheme given in \eqref{numerical_scheme} has both WB and AP properties.
\end{proposition}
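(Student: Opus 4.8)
The plan is to prove the two properties separately, each leaning on one of the two steps that define the numerical flux. For the well-balanced property the key remark is that the stationary problem \eqref{eq:hatf} is exactly the steady-state form of \eqref{eq:neutrontrans} (multiply the time-independent version by $\varepsilon$), so any continuous steady state $f^\ast$ solves \eqref{eq:hatf} on every cell. Taking $f_i^n$ to be the restriction of such an $f^\ast$, I would first observe that the prediction step returns $\tilde f_i^{n+1}=f_i^n$: because the unified gas-kinetic scheme builds its interface values from local solutions of the transport equation and $f^\ast$ is exactly such a solution, $f^\ast$ is a fixed point of the prediction step. Then the inflow data \eqref{eq:stableinflow} reduce to the interface values of $f^\ast$, and by uniqueness for the two-point boundary value problem \eqref{eq:hatf} the steady-problem solver reproduces $\hat f=f^\ast$ on each cell; the outflow fluxes \eqref{flux_steady_state} are therefore $\hat f^n_{i-1/2}=f_i^n$ for $v>0$ and $\hat f^n_{i+1/2}=f_i^n$ for $v<0$. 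Inserting these into \eqref{numerical_scheme} and testing the ansatz $f_i^{n+1}=f_i^n$, the spatial bracket collapses to $\bigl((1-\tilde\alpha)+\tilde\alpha\bigr)f_i^n-f_i^n=0$, so the steady state is preserved exactly.

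For the asymptotic-preserving property I would let $\varepsilon\to0$ at fixed $\Delta x,\Delta t$, so that $\tilde\alpha=1$ and the interior values entering the flux in \eqref{numerical_scheme} are evaluated at time $t^{n+1}$. The prediction estimate \eqref{eq:tildefrho} renders the inflow data isotropic at leading order, and the dominant balance in \eqref{eq:hatf} then forces the Chapman--Enskog closure $\hat f=\hat\rho-\frac{\varepsilon}{\sigma_T}v\partial_x\hat\rho+O(\varepsilon^2)$ already recorded before the scheme. Taking the density moment $\tfrac12\int_{-1}^1(\cdot)\,dv$ of \eqref{numerical_scheme}, the leading isotropic parts of the cell value and of the fluxes cancel, while the surviving $O(\varepsilon)$ anisotropic part of $\hat f$, weighted by $v$ and using $\tfrac12\int_{-1}^1 v^2\,dv=\tfrac13$, turns the flux difference $\frac{1}{\varepsilon\Delta x}(\hat f^n_{i+1/2}-\hat f^n_{i-1/2})$ into the discrete diffusion operator $-\partial_x\bigl(\frac{1}{3\sigma_T}\partial_x\rho\bigr)$. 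Combined with the absorption and source contributions carried by \eqref{eq:hatf}, this produces the discrete analogue of $\partial_t\rho-\partial_x(\frac{1}{3\sigma_T}\partial_x\rho)+\sigma_a\rho=q$, which is the required limiting equation.

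The hard part will be the compatibility of the two steps in the well-balanced argument: everything downstream of the inflow data—uniqueness in the cell-wise boundary value problem and the cancellation in \eqref{numerical_scheme}—is routine, but it all hinges on the prediction step not perturbing the interface data away from $f^\ast$, i.e. on $f^\ast$ being an exact fixed point of the prediction AP scheme. Establishing this for the concrete unified gas-kinetic discretization, rather than merely invoking it for a generic AP scheme, is the delicate point, and it is precisely what links the abstract statement to the explicit constructions of Sections 3 and 4.
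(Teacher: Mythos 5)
Your well-balanced argument has a genuine gap, and you have flagged it yourself: everything rests on the claim that a continuous steady state $f^\ast$ is an \emph{exact} fixed point of the prediction step, i.e.\ $\tilde f_i^{n+1}=f_i^n$. That claim is not available from the hypotheses (the proposition assumes only that the prediction scheme is AP, hence consistent), and it is generally false for the concrete UGKS of Section 3: its fluxes are built from piecewise-constant reconstructions of $f$ and piecewise-linear reconstructions of $\mathcal{T}^1 f$, so a steady solution of the PDE is reproduced only up to truncation error, not exactly. The paper's proof shows that exactness is neither needed nor claimed --- the WB property proved is preservation of the steady state to second-order accuracy, not exact preservation. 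It uses only consistency, $\tilde f_i^{n+1}=f_i^n+O(\Delta x,\Delta t)$, together with the crucial mechanism missing from your proposal: in the resolved regime $\tilde\alpha=\Delta t/\varepsilon$, so the prediction enters the inflow data \eqref{eq:stableinflow} multiplied by $\tilde\alpha$, and its $O(\Delta x,\Delta t)$ error is damped to an $O(\Delta x\Delta t)$ perturbation of the inflow; propagating this through the cell-wise steady solve and the update \eqref{numerical_scheme}, and invoking the CFL relation $\Delta t=O(\Delta x)$, gives $f_i^{n+1}=\bar f(x_i,v)+O(\Delta x^2)$. Without this damping (or a proof of your exact fixed-point claim, which would have to be redone for every admissible prediction scheme) the cancellation $\bigl((1-\tilde\alpha)+\tilde\alpha\bigr)f_i^n-f_i^n=0$ has no basis.

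The AP half also deviates from what the scheme actually does, in a way that leaves a gap. When $\tilde\alpha=1$ the flux bracket for $v>0$ is $f_i^{n+1}-\hat f_{i-1/2}^n$ (and $\hat f_{i+1/2}^n-f_i^{n+1}$ for $v<0$): the outgoing flux is the implicit cell value, not a second hat-flux, so the quantity $\frac{1}{\varepsilon\Delta x}\bigl(\hat f^n_{i+1/2}-\hat f^n_{i-1/2}\bigr)$ you propose to expand never appears in the moment equation. More importantly, your Chapman--Enskog computation would produce a diffusion operator acting on the cell-interior steady densities $\hat\rho$, equivalently on the predicted values $\tilde\rho^{n+1}$, and you never close this into an evolution equation for $\rho_i^n$ itself. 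The paper's route is shorter and avoids both issues: solving \eqref{numerical_scheme} for $f_i^{n+1}$ (the scheme is implicit in $f_i^{n+1}$ when $\tilde\alpha=1$) gives $f_i^{n+1}=\hat f^n_{i\mp 1/2}+O(\varepsilon)=\tilde\rho_i^{n+1}+O(\varepsilon)$ for $\abs{v}$ bounded away from $0$, hence $\rho_i^{n+1}=\tilde\rho_i^{n+1}+O(\varepsilon)$; the discrete diffusion equation is then \emph{inherited} from the AP property of the prediction step rather than re-derived from the fluxes. If you insist on a direct derivation, you must still establish the identification $\rho_i^{n+1}=\tilde\rho_i^{n+1}+O(\varepsilon)$ --- at which point the paper's argument is already complete.
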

\begin{proof}
\textbf{AP property: }When $\varepsilon$ tends to zero, since AP schemes are used for the 
 prediction step, we have \eqref{eq:tildefrho}.
From \eqref{eq:tildealpha}, when $\varepsilon$ is small, $\tilde{\alpha}=1$, then the boundary conditions
for the steady state problem \eqref{eq:hatf}
on each cell becomes
$$
\hat{f}(x_{i},v)=\tilde{\rho}_{i}^{n+1}+O(\varepsilon),\quad  v>0,\qquad
\hat{f}(x_{i+1},v)=\tilde{\rho}_{i+1}^{n+1}+O(\varepsilon),\quad v<0.
$$
When $\varepsilon$ is small, at the leading order, the boundary conditions for the steady state problem \eqref{eq:hatf} 
are isotropic. Therefore we have
$$\hat{f}=\hat{\rho}+O(\varepsilon)$$
and
$$\begin{aligned}
 \hat{f}^n_{i+1/2}&=\hat{f}(x_{i+1},v)=\tilde{\rho}^{n+1}_{i+1}+O(\varepsilon),\quad  v>0,\\
 \hat{f}^n_{i+1/2}&=\hat{f}(x_{i},v)=\tilde{\rho}^{n+1}_{i}+O(\varepsilon),\quad   v<0.
 \end{aligned}
 $$
Then from \eqref{numerical_scheme}, 
$$\begin{aligned}
f_i^{n+1}&=\frac{\hat{f}_{i-\frac{1}{2}}+\frac{\varepsilon \Delta x}{v\Delta t}f_i^n}{1+\frac{\varepsilon \Delta x}{v\Delta t}}=\hat{f}_{i-\frac{1}{2}}^{n}+O(\varepsilon)=\tilde{\rho}^{n+1}_{i}+O(\varepsilon),\quad v>v_0,\\
f_i^{n+1}&=\frac{\hat{f}_{i+\frac{1}{2}}-\frac{\varepsilon \Delta x}{v\Delta t}f_i^n}{1-\frac{\varepsilon \Delta x}{v\Delta t}}=\hat{f}_{i+\frac{1}{2}}^{n}+O(\varepsilon)=\tilde{\rho}^{n+1}_{i}+O(\varepsilon),\quad v<-v_0.
\end{aligned}
$$with $v_0$ small positive value away from $0$. 
Therefore, we get
\begin{equation*}
\rho_i^{n+1}=\tilde{\rho}_i^{n+1}+O(\varepsilon).
\end{equation*}
Since $\tilde{\rho}_i^{n+1}$ satisfy the macroscopic equation at the discrete level, i.e. the leading order of $f_i^{n+1}$ evolves according to a discrete diffusion equation, which gives the AP property of the proposed scheme \eqref{numerical_scheme}.

\bigskip
\textbf{WB property: }To prove the WB property, we 
assume that at time $t^n$, $f_i^{n}=\bar{f}(x_i,v)$ where $\bar{f}(x,v)$ is the solution to the steady state equation 
 \eqref{eq:hatf} with given inflow boundary conditions on the whole computational domain. We need to show that when
 $\Delta x$, $\Delta t$ are small enough, $f_i^{n+1}=f_i^n+O(\Delta x\Delta t)$. For the resolved case under consideration, 
 $$\tilde{\alpha}=\frac{\Delta t}{\varepsilon}.$$
 Since the scheme at the prediction step is consistent, consider the simplest first order method, we have 
$$
\tilde{f}_i^{n+1}=f_i^n+O(\Delta x, \Delta t).
$$
The inflow boundary conditions in \eqref{eq:stableinflow} becomes
$$
\begin{cases}
&\hat{f}(x_i,v)=(1-\tilde{\alpha})f_i^n+\tilde{\alpha} \tilde{f}_i^{n+1}=f_i^n+O(\Delta x\Delta t),\quad v>0,\\
&\hat{f}(x_{i+1},v)=(1-\tilde{\alpha})f^n_{i+1}+\tilde{\alpha} \tilde{f}_{i+1}^{n+1}=f_{i+1}^n+O(\Delta x\Delta t),\quad  v<0.
\end{cases}
$$
Since the steady state problem is solved in a cell of size $\Delta x$, by the simplest Taylor expansion, the out flow can be approximated by at least second order
accuracy, so that
$$\begin{aligned}
& \hat{f}^n_{i+1/2}=\bar{f}(x_{i+1},v)+O(\Delta x\Delta t,\Delta x^2),\quad  v>0;\\
 &\hat{f}^n_{i+1/2}=\bar{f}(x_{i},v)+O(\Delta x\Delta t,\Delta x^2),\quad   v<0.
\end{aligned}
$$
Thus, \eqref{numerical_scheme} yields
\begin{equation}\label{eq:finplus1}
\begin{aligned}
f_i^{n+1}&=\frac{\hat{f}_{i-\frac{1}{2}}+\frac{\varepsilon \Delta x}{v\Delta t}f_i^n}{1+\frac{\varepsilon \Delta x}{v\Delta t}}
=\bar{f}(x_i,v)+O(\Delta x\Delta t,\Delta x^2),\quad v>0,\\
f_i^{n+1}&=\frac{\hat{f}_{i+\frac{1}{2}}-\frac{\varepsilon \Delta x}{v\Delta t}f_i^n}{1-\frac{\varepsilon \Delta x}{v\Delta t}}
=\bar{f}(x_i,v)+O(\Delta x\Delta t,\Delta x^2),\quad v<0.
\end{aligned}
\end{equation}
Due to the CFL condition, $O(\Delta t)= O(\Delta x)$, the solution approximate the steady state $\bar{f}$ with
a formally second order accuracy $O(\Delta x^2)$.
\end{proof}
\begin{remark}
The framework in this section does not depend on specific AP methods for the time evolutionary problem and the steady
state problem, we can choose any scheme in the literature that has been proved to be AP for various kinetic models. 
\end{remark}
\begin{remark}
The accuracy of preserving the steady state problem can be improved by using higher order method in space and time,
or we can repeat once the steady problem step by replacing $\tilde{f}_i^{n+1}$ from the prediction step by $f_i^{n+1}$
obtained in \eqref{eq:finplus1}.
\end{remark}
\section{The chemotaxis kinetic model}
In this section, we apply the framework in section 2 to the chemotaxis kinetic model \eqref{eq_chemotaxis_eps}.
Two specific AP schemes are respectively chosen for the time evolutionary problem and steady state problem. 
We first extend the unified gas kinetic scheme (UGKS) in \cite{Mieussens,Xu2,Xu} to get an AP scheme for the time evolutionary problem, then solve the steady state equation by extending the scheme in \cite{Min} which was originally designed for the isotropic neutron transport equation. In \cite{Bokai}, the authors have proposed AP schemes for
the chemotaxis kinetic model, our approach is different. The details are described below.

As in \cite{Mieussens}, the UGKS is a finite volume approach of integrating \eqref{eq_chemotaxis_eps} over $[x_{i-1/2},x_{i+1/2}] \times [t^n,t^{n+1}] \times V$. Let
$$f_i^n=\frac{1}{\Delta x}\int_{x_{i-1/2}}^{x_{i+1/2}}f(x,v,t^n)\,d x,\qquad\rho_i^n=\frac{1}{|V|}\int_Vf_i^n\,d v.$$
 $\rho_i^{n+1}$ and $f_i^{n+1}$ are updated as follows
\begin{equation}\label{eq_rho_tilde}
\frac{{\rho}_i^{n+1}-\rho_i^n}{\Delta t}+\frac{F_{i+1/2}^n-F_{i-1/2}^n}{\Delta x}=0,
\end{equation} 
\begin{multline}\label{eq_f_tilde}
 \frac{f_i^{n+1}-f_i^n}{\Delta t}+\frac{\Phi_{i+1/2}^n-\Phi_{i-1/2}^n}{\Delta x}=\frac{1}{\varepsilon^2}\left(\rho_i^{n+1}-f_i^{n+1}\right)\\
 +\frac{1}{\varepsilon}\left(\frac{1}{\abs{V}}\int_{V} \phi(v\sigma_{i+1/2})f_i^n-\phi(v\sigma_{i+1/2})f_i^n \right).
\end{multline}
Here the numerical flux \begin{equation}\label{eq:PhiF}\begin{aligned}&\Phi^n_{i+1/2}\approx \frac{1}{\varepsilon \Delta t}\int_{t^n}^{t^{n+1}}
vf(x_{i+1/2},v,t)\,dt,\\& F_{i+1/2}^n\approx\frac{1}{|V|}\int_V\Big(\frac{1}{\varepsilon \Delta t}\int_{t^n}^{t^{n+1}}
vf(x_{i+1/2},v,t)\,dt\Big)\,dv.\end{aligned}\end{equation} 
Since one dimensional problem is considered here, we use discrete ordinate method for the velocity discretization.
The most crucial step for UGKS is to determine $\Phi^n_{i+1/2}$
and $F_{i+1/2}^n$.  The details are as below:

\subsection{Determine $\Phi^n_{i+1/2}$, $F_{i+1/2}^n$}
In one dimensional space, the chemotaxis model \eqref{eq_chemotaxis_eps} can be rewritten as follows:
\begin{equation}\label{eq_kinetics}
\partial_t f^{\varepsilon}+\frac{1+\varepsilon \phi(v\partial_x S^{\varepsilon})}{\varepsilon^2}f^{\varepsilon}+\frac{v}{\varepsilon} \partial_x f^{\varepsilon}=\frac{1}{\varepsilon^2} \mathcal{T}^1 f,
\end{equation}
where $\displaystyle (\mathcal{T}^1 f)(x,t):=\frac{1}{\abs{V}}\int_{V}\big(1+\varepsilon \phi(v\partial_x S)\big)f(x,v,t)dv$.

To approximate the flux $F_{i+1/2}^n$ in \eqref{eq:PhiF},
we first approximate $\partial_x S$ by a piecewise constant function such that
\begin{equation*}
\partial_x S \approx \partial_x S(x_{i+1/2}):=\sigma_{i+1/2},\quad x \in [x_{i},x_{i+1}).
\end{equation*}
It is important to note that $\sigma_{i+1/2}$ approximate $\partial_xS$ in the interval $[x_{i},x_{i+1})$
while $f_i^n$ is the density average over the cell $[x_{i-1/2},x_{i+1/2})$. This choice is important to preserve the 
advection term in the limiting Keller-Segel model when $\varepsilon$ becomes small.

On the interval $[x_{i},x_{i+1})$, multiplying both sides of \eqref{eq_kinetics} by $\exp{\left(\frac{(1+\varepsilon \phi(v\sigma_{i+1/2})}{\varepsilon^2}t\right)}$ yields 
\begin{equation*}
\frac{d}{dt} \left[f(x+\frac{v}{\varepsilon}t,v,t) \exp{\left(\frac{(1+\varepsilon \phi(v\sigma_{i+1/2})}{\varepsilon^2}t\right)}\right]=\frac{\mathcal{T}^1 f(x,t)}{\varepsilon^2} \exp{\left(\frac{(1+\varepsilon \phi(v\sigma_{i+1/2})}{\varepsilon^2}t\right)}.
\end{equation*}
Integrating the above equation over $(t^n,t)$ yields
\begin{equation}\label{eq:fxiplus}\begin{aligned}
&f(x_{i+1/2},v,t)=f(x_{i+1/2}-\frac{v}{\varepsilon}(t-t^n),v,t^n)\exp{\left(-\frac{(1+\varepsilon \phi(v\sigma_{i+1/2})}{\varepsilon^2}(t-t^n)\right)}\\
&\qquad+\frac{1}{\varepsilon^2}\int_{t^n}^t \mathcal{T}^1 f(x_{i+1/2}-\frac{v}{\varepsilon}(t-s),s)\exp{\left(-\frac{(1+\varepsilon \phi(v\sigma_{i+1/2})}{\varepsilon^2}(t-s)\right)}ds.
\end{aligned}
\end{equation}
This is an exact expression for $f(x_{i+1/2},v,t)$ that will be used to determine $\Phi^n_{i+1/2}$, $F_{i+1/2}^n$ in \eqref{eq:PhiF}.
At this stage, we need to approximate $f(x,v,t^n)$ and $\mathcal{T}^1 f(x,t)$ on the right hand side of \eqref{eq:fxiplus}.
 $f$ is approximated by a piecewise constant function and $\mathcal{T}^1 f$ by a piecewise linear function as follows:
\begin{equation*}
\begin{aligned}
& f(x,v,t^n)=\left\{
\begin{aligned}
& f_i^n,\quad x<x_{i+1/2},\\
& f_{i+1}^n,\quad x>x_{i+1/2},
\end{aligned}
\right.\\
&\mathcal{T}^{1} f(x,t)=\left\{
\begin{aligned}
&\mathcal{T}^1 f^n_{i+1/2}+\delta^L\mathcal{T}^{1}f^n_{i+1/2}(x-x_{i+1/2}) ,\quad x<x_{i+1/2},\\
&\mathcal{T}^1 f^n_{i+1/2}+\delta^R\mathcal{T}^{1}f^n_{i+1/2}(x-x_{i+1/2}),\quad x>x_{i+1/2}.
\end{aligned}
\right.
\end{aligned}
\end{equation*}
Here $\mathcal{T}^1 f^n_{i+1/2},\,\delta^L\mathcal{T}^1f^n_{i+1/2},\, \delta^R\mathcal{T}^1f^n_{i+1/2}$ are defined by: 
\begin{equation*}
\left\{
\begin{aligned}
\mathcal{T}^1 f^n_{i+1/2}&:=\frac{1}{\abs{V}}\int_{V^{-}} (1+\varepsilon \phi(v\sigma_{i+1/2}))f_{i+1}^n+\frac{1}{\abs{V}}\int_{V^+} (1+\varepsilon \phi(v\sigma_{i+1/2}))f_i^n,\\
\delta^L\mathcal{T}^1f^n_{i+1/2}&:=\frac{\mathcal{T}^1f^n_{i+1/2}-\mathcal{T}^1f^n_{i}}{\Delta x/2},\\
\delta^R\mathcal{T}^1f^n_{i+1/2}&:=\frac{\mathcal{T}^1f^n_{i+1}-\mathcal{T}^1f^n_{i+1/2}}{\Delta x/2},
\end{aligned}
\right.
\end{equation*}
with $V^+=V \cap \mathbb{R}^+$ and $V^-=V\cap \mathbb{R}^-$.\\
Substituting the above approximations into \eqref{eq:fxiplus} yields an expression for $f(x_{i+1/2},v,t)$ such that:\\
For $v>0$,\begin{equation}\label{eq_fvp}
\begin{aligned}
&f(x_{i+1/2},v,t)=f_i^n \exp{\left(-\frac{(1+\varepsilon \phi(v\sigma_{i+1/2})}{\varepsilon^2}(t-t^n)\right)}\\
&\quad+\frac{\mathcal{T}^1 f^n_{i+1/2}}{1+\varepsilon \phi(v\sigma_{i+1/2})}\left(1-\exp{\left(-\frac{(1+\varepsilon \phi(v\sigma_{i+1/2})}{\varepsilon^2}(t-t^n)\right)} \right)
+v \varepsilon \frac{\delta^L\mathcal{T}^1f^n_{i+1/2}}{(1+\varepsilon \phi(v\sigma_{i+1/2}))^2}\\
&\quad\cdot\left[\left(1+ \frac{1+\varepsilon \phi(v\sigma_{i+1/2})}{\varepsilon^2} (t-t^n)\right)\exp{\left(-\frac{(1+\varepsilon \phi(v\sigma_{i+1/2})}{\varepsilon^2}(t-t^n)\right)}-1 \right],
\end{aligned}
\end{equation}
and for $v<0$, 
\begin{equation}\label{eq_fvm}
\begin{aligned}&f(x_{i+1/2},v,t)
=f_{i+1}^n \exp{\left(-\frac{(1+\varepsilon \phi(v\sigma_{i+1/2})}{\varepsilon^2}(t-t^n)\right)}\\
&\quad+\frac{\mathcal{T}^1 f^n_{i+1/2}}{1+\varepsilon \phi(v\sigma_{i+1/2})}\left(1-\exp{\left(-\frac{(1+\varepsilon \phi(v\sigma_{i+1/2})}{\varepsilon^2}(t-t^n)\right)} \right)
+v \varepsilon \frac{\delta^R\mathcal{T}^1f^n_{i+1/2}}{(1+\varepsilon \phi(v\sigma_{i+1/2}))^2}\\
&\quad\cdot\left[\left(1+ \frac{1+\varepsilon \phi(v\sigma_{i+1/2})}{\varepsilon^2} (t-t^n)\right)\exp{\left(-\frac{(1+\varepsilon \phi(v\sigma_{i+1/2})}{\varepsilon^2}(t-t^n)\right)}-1 \right].
\end{aligned}
\end{equation}
Then the flux $\Phi^n_{i+1/2}(v)$ in \eqref{eq:PhiF} can be approximated by
\begin{equation}\label{eq_phi}
 \begin{aligned}
 &\Phi_{i+1/2}(v)=A v f_{i+1}^n+Bv \mathcal{T}^1 f^n_{i+1/2}+Cv^2 \delta^R\mathcal{T}^1 f^n_{i+1/2},\quad \text{for } v<0,\\
 & \Phi_{i+1/2}(v)=A v f_i^n+Bv \mathcal{T}^1 f^n_{i+1/2}+Cv^2 \delta^L\mathcal{T}^1 f^n_{i+1/2},\quad \text{for } v>0,\\
 \end{aligned}
\end{equation}
where the coefficients $A(v,\varepsilon,\Delta t),B(v,\varepsilon,\Delta t),C(v,\varepsilon,\Delta t)$ can be determined explicitly such that
\begin{equation}\label{eq_A_B_C}
\begin{aligned}
A(v,\varepsilon,\Delta t):&= \frac{\varepsilon}{\Delta t\big(1+\varepsilon \phi(v\sigma_{i+1/2})\big)}\left(1-\exp{\big(-\frac{1+\varepsilon \phi(v\sigma_{i+1/2})}{\varepsilon^2}\Delta t\big)}\right),\\
B(v,\varepsilon,\Delta t):&=\frac{1}{\varepsilon(1+\varepsilon \phi(v\sigma_{i+1/2}))}\\
&-\frac{\varepsilon}{\Delta t(1+\varepsilon \phi(v\sigma_{i+1/2}))^2}\left(1-\exp{\big(-\frac{1+\varepsilon \phi(v\sigma_{i+1/2})}{\varepsilon^2}\Delta t\big)}\right),\\
C(v,\varepsilon,\Delta t):&=\frac{2\varepsilon^2}{\Delta t (1+\varepsilon \phi(v\sigma_{i+1/2}))^3}\left(1-\exp{\big(-\frac{1+\varepsilon \phi(v\sigma_{i+1/2})}{\varepsilon^2}\Delta t\big)}\right)\\
\phantom{C(v,\varepsilon,\Delta t)}&-\frac{1}{(1+\varepsilon \phi(v\sigma_{i+1/2}))^2}\left(1+\exp{\big(-\frac{1+\varepsilon \phi(v\sigma_{i+1/2})}{\varepsilon^2}\Delta t\big)}\right).
\end{aligned}
\end{equation}
Furthermore, $F_{i+1/2}^n$ in \eqref{eq:PhiF} is given by
\begin{multline}\label{eq_flux}
F_{i+1/2}^n=\frac{1}{\abs{V}} \int_{V^-} A v f_{i+1}^n dv +\frac{1}{\abs{V}} \int_{V^+}A v f_i^n dv+\frac{1}{\abs{V}}\mathcal{T}^1 f^n_{i+1/2}\int_{V} vB dv\\
+\frac{1}{\abs{V}}\delta^R\mathcal{T}^1 f^n_{i+1/2} \int_{V^-}C v^2 dv +\frac{1}{\abs{V}} \delta^L\mathcal{T}^1 f^n_{i+1/2}\int_{V^+} C v^2 dv.
\end{multline}
and we complete the construction of the scheme. We can prove that the scheme is AP by employing similar approach as in  
\cite{Mieussens}.
\subsection{AP property}
In this part, we give a formal derivation of the AP property for the UGKS proposed in \eqref{eq_rho_tilde}--\eqref{eq_f_tilde}.
When $\varepsilon$ goes to zero, asymptotic expansions of $A,B,C$ given in \eqref{eq_A_B_C} read
 $$
 A=O(\varepsilon),\qquad
 B=\frac{1}{\varepsilon}-\phi(v\sigma_{i+1/2})+O(\varepsilon),\qquad
 C=-1+O(\varepsilon).
 $$
The leading order term of \eqref{eq_f_tilde} yields $f_i^{n+1}=\rho_i^{n+1}+O(\varepsilon)$ and we only need to show
that \eqref{eq_rho_tilde} satisfies the equation for $\rho$ in \eqref{limit_eq},
at the discrete level. 
Suppose that $f_i^n=\rho_i^n+O(\varepsilon)$, then 
\begin{equation*}
\left\{
\begin{aligned}
\mathcal{T}^1 f^n_{i+1/2}&=\frac{1}{2}\left(\rho_i^n+\rho_{i+1}^n\right)+O(\varepsilon),\\
\delta^L\mathcal{T}^1f^n_{i+1/2}&=\frac{\rho^n_{i+1}-\rho_i^n}{\Delta x}+O(\varepsilon),\\
\delta^R\mathcal{T}^1f^n_{i+1/2}&=\frac{\rho^n_{i+1}-\rho_i^n}{\Delta x}+O(\varepsilon).
\end{aligned}
\right.
\end{equation*}
We deduce that the expansion of $F_{i+1/2}^n$ reads:
\begin{equation*}
 F_{i+1/2}^n=-\frac{\rho_i^n+\rho_{i+1}^n}{2\abs{V}}\left(\int_{V} v\phi(v\sigma_{i+1/2})dv \right)-\frac{\rho_{i+1}^n-\rho_i^n}{3 \Delta x}+O(\varepsilon).
\end{equation*}
Therefore,
$$\begin{aligned}
 &\frac{F^n_{i+1/2}-F^n_{i-1/2}}{\Delta x}\\
 =&-\frac{\rho_{i+1}^n-2\rho_i^n+\rho_{i-1}^n}{3(\Delta x)^2}
 +\Big(-\Big(\frac{1}{\abs{V}}\int_{V} v\phi(v\sigma_{i+1/2})dv \Big) \frac{\rho_i^n
 +\rho_{i+1}^n}{2}\\&\qquad\qquad\qquad\qquad\qquad\quad+\Big(\frac{1}{\abs{V}}\int_{V} v\phi(v\sigma_{i-1/2})dv \Big) \frac{\rho_i^n+\rho_{i-1}^n}{2} \Big)+O(\varepsilon).
\end{aligned}
$$
 In the limit of $\varepsilon\to 0$, the discretization \eqref{eq_rho_tilde} becomes
\begin{multline*}
\frac{\rho_i^{n+1}-\rho_i^n}{\Delta t}=\frac{\rho_{i+1}^n-2\rho_i^n+\rho_{i-1}^n}{3(\Delta x)^2}\\
 +\left(\frac{1}{\abs{V}}\left(\int_{V} v\phi(v\sigma_{i+1/2})dv \right) \frac{\rho_i^n+\rho_{i+1}^n}{2}-\frac{1}{\abs{V}}\left(\int_{V} v\phi(v\sigma_{i-1/2})dv \right) \frac{\rho_i^n+\rho_{i-1}^n}{2} \right).
\end{multline*}
which is a consistent discretization of the equation for $\rho$ in \eqref{limit_eq}.
The proposed scheme is AP.

\subsection{Steady state problem for the chemotaxis kinetic model}
To solve
the steady state problem, we start from the most used discrete ordinate method \cite{Chandrasekhar,Kubelka}.
The discrete ordinate method is to discretize the velocity space $V=[-1,1]$ by 
a quadrature set $\{\mu_m,\omega_m\}$ ($m\in V_m=\{-N,\cdots,-1,1,\cdots,N\}$). In one dimensional case, 
 the most used and well accepted quadrature is the Gaussian quadrature. 
In order to preserve the diffusion limit equation, the points $\mu_m$ and weights $\omega_m$ satisfy
\begin{equation*}
\begin{aligned}
& \mu_{-m}=-\mu_m,\quad \omega_{-m}=\omega_m,\quad m\in 1\cdots N,\\
& \sum_{m} \omega_m =2,\quad \sum_{m} \omega_m \mu_m^2=\frac{2}{3}.\\
\end{aligned}
\end{equation*}
 The discrete ordinate method for the steady state chemotaxis kinetic model on each cell $[x_i,x_{i+1})$ writes
\begin{equation}
\mu_m \partial_x f_m=\frac{1}{2\varepsilon}\sum_{n \in V}\omega_n\left(1+\varepsilon \phi(\mu_n \sigma_{i+1/2})\right)f_n-\frac{1}{\varepsilon}\left(1+\varepsilon \phi(\mu_m \sigma_{i+1/2})\right)f_m,\quad m\in V_m.
\label{steady_state_chemotaxis}
\end{equation}
This is  a linear ODE system with constant matrix coefficient.  Together with the inflow boundary conditions, the exact solution 
can be obtained analytically. 
We use the following procedure to construct the general solution on each interval $[x_i,x_{i+1})$.
We seek eigenfunctions of the form \begin{equation}l_m \exp(-\frac{\zeta}{\varepsilon} x).\label{eq:lzeta}\end{equation} By substituting this form into \eqref{steady_state_chemotaxis}, one obtains 
\begin{equation}\label{eq:eigenphi}
 \left[\left(1+\varepsilon \phi(\mu_m \sigma_{i+1/2})\right)-\mu_m \zeta \right]l_m=\frac{1}{2}\sum_{n \in V}\omega_n\left(1+\varepsilon \phi(\mu_m \sigma_{i+1/2})\right)l_m.
\end{equation}
The above equation holds for all $m\in V_m$ which indicates that $\zeta$ is the eigenvalue of the matrix 
\begin{equation}
E=U^{-1}\big(I-\frac{1}{2}W\big)\big(I+\varepsilon\phi(U\sigma_{i+1/2})\big).
\label{matrix}
\end{equation}
Here $U=\mbox{Diag}(\mu_m)_{m\in V_m}$ and 
$\phi(U\sigma_{i+1/2})=\mbox{Diag}\big(\phi(\mu_m\sigma_{i+1/2})\big)_{m\in V_m}$ are diagonal matrixes, $W$ is a rank 1 matrix whose rows are $(\omega_m)_{m\in V_m}$. Due to \eqref{eq:eigenphi}, $\zeta$ is the root of 
\begin{equation}\label{eq:eigenzeta}
\frac{1}{2}\sum_{m\in V_m}\frac{\big(1+\varepsilon\phi(\mu_m\sigma_{i+1/2})\big)\omega_m}{1+\varepsilon\phi(\mu_m\sigma_{i+1/2})-\mu_m\zeta}=1
\end{equation}
and the
eigenvector associated to $\zeta$ can be given by 
\begin{equation}
l_m=\frac{1}{1+\varepsilon \phi(\mu_m \sigma_{i+1/2})-\mu_m \zeta},\qquad m\in V_m.
\label{eq_l_m}
\end{equation}
It is easy to check that when $\varepsilon\neq 0$, $\zeta=0$ is a simple root if $\phi(u)=-\phi(-u)$. Besides if
$\nu_m=\frac{1+\varepsilon \phi(\mu_m \sigma_{i+1/2})}{\mu_m}$ ($m\in V_m$) are different from each other, 
since the sign of $\nu_m$ are determined by $\mu_m$, the functional $\frac{1}{2}\sum_{m\in V_m}\frac{\big(1+\varepsilon\phi(\mu_m\sigma_{i+1/2})\big)\omega_m}{1+\varepsilon\phi(\mu_m\sigma_{i+1/2})-\mu_m\zeta}$
tends to plus or minus infinity when $\zeta$ approaches $\nu_m$ from left or right. Therefore, 
\eqref{eq:eigenzeta} has $2N$ simple roots when $\varepsilon$ is away from zero. However when $\varepsilon\to 0$, $E$ becomes $U^{-1}(I-W/2)$ whose eigenvalues admit a zero double root \cite{Chandrasekhar,Min}. The general solutions to \eqref{steady_state_chemotaxis} are different for these two different cases, we discuss them separately in the subsequent part.

\bigskip
\textbf{Case I: } If \eqref{eq:eigenzeta} has $2N$ different simple roots, we can find $2N$ linearly independent eigenfunctions of the form $l_m \exp(-\frac{\zeta}{\varepsilon} x)$. Then the general solution to \eqref{steady_state_chemotaxis} can be written as
$$f_m(x)=
 \sum_{\zeta_n<0} B^n l_m^n\exp\big(-\frac{\zeta_n}{\varepsilon}(x-x_{i+1})\big)
+\sum_{\zeta_n>0} A^n l_m^n\exp\big(-\frac{\zeta_n}{\varepsilon}(x-x_{i})\big),
$$
The coefficients $B^n,A^n$ can be determined by
the inflow boundary conditions
\begin{equation*}
\left\{
\begin{aligned}
& f_m(x_{i+1})=f_m^R,\quad m<0,\\
& f_m(x_i)=f_m^L,\quad m>0.
\end{aligned}
\right.
\end{equation*}
Then the outflows are given by
\begin{equation*}
 \begin{aligned}
  f_m(x_i)&=\sum_{\zeta_n<0} B^n l_m^n\exp\big(\frac{\zeta_n}{\varepsilon}\Delta x\big)+\sum_{\zeta_n>0} A^n l_m^n,\quad m<0,\\
  f_{m}(x_{i+1})&=\sum_{\zeta_n<0} B^n l_m^n+\sum_{\zeta_n>0} A^n l_m^n\exp\big(-\frac{\zeta_n}{\varepsilon}\Delta x\big),\quad m>0.
 \end{aligned}
\end{equation*}

\bigskip
\textbf{Case II:}
We consider the limiting case when $E=U^{-1}(I-W/2)$, the eigenvalues for this matrix have been proved in \cite{Min}
to have the following propperty
\begin{theorem}
The equation $$\frac{1}{2}\sum_{m\in V_m}\frac{\omega_m}{1-\mu_m\zeta}=1$$ has $2(N-1)$ simple roots appear in positive/negative paris while $0$ is a double root.
 \end{theorem}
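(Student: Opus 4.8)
The plan is to reduce the rational equation to a one‑variable monotonicity argument by exploiting the parity built into the quadrature. Write $g(\zeta)=\frac{1}{2}\sum_{m\in V_m}\frac{\omega_m}{1-\mu_m\zeta}$, so the equation is $g(\zeta)=1$. First I would record that $g(0)=\frac{1}{2}\sum_{m}\omega_m=1$, so $\zeta=0$ is a root. Using the symmetry $\mu_{-m}=-\mu_m$, $\omega_{-m}=\omega_m$, the reindexing $m\mapsto -m$ shows $g(-\zeta)=g(\zeta)$, i.e.\ $g$ is even; pairing the $\pm m$ terms gives the reduced form $g(\zeta)=h(\zeta^2)$ with $h(t):=\sum_{m>0}\frac{\omega_m}{1-\mu_m^2 t}$. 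Consequently every nonzero root comes with its negative, which already produces the announced positive/negative pairing, and it remains only to count the roots of $h(t)=1$ for $t\ge 0$.

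Next I would pin down the multiplicity at the origin. Expanding $g$ near $\zeta=0$ and using the two moment conditions $\sum_m\omega_m\mu_m=0$ and $\sum_m\omega_m\mu_m^2=\frac{2}{3}$ gives $g(\zeta)-1=\frac{1}{3}\zeta^2+O(\zeta^4)$, so $\zeta=0$ is exactly a double root (equivalently $t=0$ is a simple root of $h(t)-1$). Clearing denominators, $g(\zeta)-1$ equals $-Q(\zeta)/\prod_m(1-\mu_m\zeta)$, where $Q$ is an even polynomial of degree $2N$ with nonzero leading coefficient $\prod_m\mu_m$; since the denominator does not vanish at $0$, the order of vanishing of $Q$ at $0$ equals that of $g-1$, namely $2$. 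Thus $Q$ has $2N-2=2(N-1)$ further roots to locate, and their reality and simplicity are what the theorem asserts.

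The core of the argument is the monotonicity of $h$ between its poles. Order the positive nodes $0<\mu_1<\cdots<\mu_N<1$, so $h$ has exactly the $N$ simple poles $t=1/\mu_m^2$. Wherever $h$ is finite one computes $h'(t)=\sum_{m>0}\frac{\omega_m\mu_m^2}{(1-\mu_m^2t)^2}>0$, since all weights are positive, so $h$ is strictly increasing on each interval between consecutive poles. At a pole $t=1/\mu_m^2$ the corresponding term drives $h\to+\infty$ from the left and $h\to-\infty$ from the right, so on each of the $N-1$ bounded gaps $(1/\mu_{m+1}^2,1/\mu_m^2)$ the increasing function runs from $-\infty$ to $+\infty$ and meets the level $1$ exactly once, transversally; this yields simple roots $t_1,\dots,t_{N-1}$, all strictly positive. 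I would then dispose of the two remaining gaps: on $(0,1/\mu_N^2)$ one has $h(0)=1$ with $h$ increasing, so $h>1$ there, while on $(1/\mu_1^2,\infty)$ the function increases from $-\infty$ to $0^-$, so $h<1$; hence no other roots exist. Each $t_j>0$ produces the pair $\zeta=\pm\sqrt{t_j}$, and since $\frac{d}{d\zeta}\big(h(\zeta^2)\big)=2\zeta\,h'(\zeta^2)$ is nonzero at $\zeta=\pm\sqrt{t_j}\ne0$, each such $\zeta$ is a simple root. Together with the double root at $0$ this accounts for all $2N$ roots of $Q$.

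The step I expect to be the main obstacle is the clean bookkeeping of the pole/gap structure: one must check that the $\mu_m^2$ are genuinely distinct (so the poles are simple and the gaps well defined), that the cleared polynomial has degree exactly $N$ in $t$ so that no root escapes to infinity, and that the near‑origin and unbounded gaps contribute nothing. Once the sign behavior at the poles and the strict monotonicity are in hand, the count of $N-1$ simple positive $t$‑roots — equivalently $2(N-1)$ simple, $\pm$‑paired $\zeta$‑roots together with the double root at $0$ — follows from the intermediate value theorem.
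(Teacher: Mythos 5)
Your proof is correct, and it is worth noting that the paper itself does not prove this statement at all: the theorem is quoted as a known result from the reference [Jin--Tang--Han, \emph{Netw.\ Heterog.\ Media} 4 (2009)], so your argument is a genuinely self-contained substitute. Your route also differs in a useful way from the sketch the paper gives for the neighboring case $\varepsilon\neq 0$, where the authors work directly with the $2N$ poles $\nu_m$ of the rational function in $\zeta$ and conclude $2N$ simple roots by the same blow-up/intermediate-value reasoning. By first exploiting the quadrature symmetry $\mu_{-m}=-\mu_m$, $\omega_{-m}=\omega_m$ to write $g(\zeta)=h(\zeta^2)$ with $h(t)=\sum_{m>0}\omega_m/(1-\mu_m^2 t)$, you get the positive/negative pairing for free, reduce the pole bookkeeping from $2N$ poles to $N$, and make the degenerate behavior at the origin transparent: the expansion $g(\zeta)-1=\tfrac{1}{3}\zeta^2+O(\zeta^4)$ (using $\sum_m\omega_m\mu_m=0$ and $\sum_m\omega_m\mu_m^2=\tfrac23$) pins down the double root exactly, which is precisely the feature that distinguishes the $\varepsilon=0$ case from the $\varepsilon\neq 0$ case. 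Your final degree count --- the cleared polynomial $Q$ has degree exactly $2N$ because its leading coefficient is $\pm\prod_m\mu_m\neq 0$, and you have exhibited $2(N-1)+2=2N$ real roots with multiplicity --- is also the right way to close the argument, since it simultaneously establishes simplicity, excludes complex roots, and shows no root ``escapes to infinity.'' The two hypotheses you flag (distinctness of the $\mu_m^2$ and strict positivity of the weights) are indeed what the argument needs, and both hold for the Gaussian quadrature sets the paper assumes, so they are legitimate standing assumptions rather than gaps.
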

We set the double zero eigenvalue $\zeta_N$ and $\zeta_{-N}$ and rearrange eigenvalues $\zeta_n$ from the lowest to the highest.
\begin{equation*}
\zeta_{1-N}<\cdots<\zeta_{-1}<0<\zeta_1<\cdots<\zeta_{N-1}.
\end{equation*}
For eigenvalues different of zero, eigenfunctions are exponential functions as in \eqref{eq:lzeta}. Let us find eigenfunctions corresponding to the zero eigenvalue. By using the same token as in \cite{Min}, we are looking for 
solution of the form $\alpha_m x+\beta_m$.

Injecting $\alpha_m x+\beta_m$ into \eqref{steady_state_chemotaxis} gives two solutions
\begin{equation*}
\frac{1}{1+\varepsilon \phi(\mu_m \sigma_{i+1/2})},\qquad
\frac{x}{1+\varepsilon \phi(\mu_m \sigma_{i+1/2})}-\frac{\varepsilon \mu_m}{\big(1+\varepsilon \phi(\mu_m \sigma_{i+1/2})\big)^2}.
\end{equation*}
Finally the general solution of \eqref{steady_state_chemotaxis} writes
\begin{equation*}
\begin{aligned}
f_m(x)=&\frac{c_{N}}{1+\varepsilon \phi(\mu_m \sigma_{i+1/2})}  +\frac{c_{-N}}{1+\varepsilon \phi(\mu_m \sigma_{i+1/2})} \big(x-\frac{\varepsilon\mu_m }{1+\varepsilon \phi(\mu_m \sigma_{i+1/2})}\big)\\
&+ \sum_{1\leq \abs{n} \leq N-1} c_n l_m^n\exp\left(-\frac{\zeta_n}{\varepsilon}x\right),
\end{aligned}
\end{equation*}
where $c_n$ ($1\leq |n|\leq N$) are constants to be determined by the boundary conditions.
Since $\varepsilon$ can become very small, overflow may occur when evaluating $\exp\big(-\xi_nx/\varepsilon\big)$.
In order to have $c_n$ at the same scale as $f_m(x)$, we rewrite the general solution in the interval $(x_i,x_{i+1})$
into\begin{equation}
\begin{aligned}f_m(x)=&\frac{c_1}{1+\varepsilon \phi(\mu_m \sigma_{i+1/2})}  +\frac{c_2}{1+\varepsilon \phi(\mu_m \sigma_{i+1/2})} \Big((x-x_i)-\frac{\mu_m \varepsilon}{1+\varepsilon \phi(\mu_m \sigma_{i+1/2})}\Big)\\
&+ \sum_{n<0} B^n l_m^n\exp\big(-\frac{\zeta_n}{\varepsilon}(x-x_{i+1})\big)
+\sum_{n>0} A^n l_m^n\exp\big(-\frac{\zeta_n}{\varepsilon}(x-x_{i})\big),
\end{aligned}
\label{eq_fm}
\end{equation}
with $c_1,c_2,B^n,A^n$ constants. Then we can determine $c_1,c_2,B^n,A^n$ by the inflow boundary condition, so that the outflow.

\section{The radiative transport equation}
For the radiative transport equation \eqref{radiative_transport}, the same as in the previous section, we can choose two specific AP schemes respectively for the time evolutionary problem and steady state problem. For the time evolutionary problem, 
the UGKS scheme proposed in \cite{Sun} is employed, while similar discretization as in subsection 3.3 can be used to solve the steady state problem.
We sketch the discretization used in \cite{Sun} in the subsequent part and for more details and the proof of AP, one can refer to \cite{Sun}.

\subsection{The AP UGKS for the grey radiative transport equation}
The idea is to first introduce $\psi=acT^4$ and rewrite the system \eqref{radiative_transport} into
\begin{equation}
\left\{
\begin{aligned}
& \frac{1}{c} \partial_t I+\frac{1}{\varepsilon} v \partial_x I=\frac{\sigma}{\varepsilon^2} \left(\frac{1}{|V|}\psi-I\right)+ q(x,v),\\
& \varepsilon^2 \partial_t \psi=\beta(\psi) \sigma(\rho-\psi),
\end{aligned}
\right.
\label{radiative_transportph}
\end{equation}
where $\rho:=\int_{V} I dv$ is the total radiation intensity and $\beta$ is a function of $\psi$ given by
$$
\beta(\psi)=\frac{4 ac}{C_v}\left(\frac{\psi}{a c}\right)^{3/4}.
$$
Then we take the integral with respect to $v$ in \eqref{radiative_transportph} and get a system for $\rho$, $\psi$,
with an advection term depending on $\int_VvI\,dv$. Similar  as the UGKS for the chemotaxis model, the first step is 
to get a prediction for $\rho^{n+1}$ by
\begin{equation}
\left\{
\begin{aligned}
& \rho_{i}^{n+1}=\rho_i^n+\frac{\Delta t}{\Delta x}\left(F_{i-1/2}^n-F_{i+1/2}^n \right)+\frac{\sigma c\Delta t}{\varepsilon^2}(\psi_i^{n+1}-\rho_i^{n+1})+c\Delta t\int_V q_i dv,\\
& \psi_i^{n+1}=\psi_i^n+\frac{\sigma \beta_i^{n+1}\Delta t}{\varepsilon^2}\left(\rho_i^{n+1}-\psi_i^{n+1}\right),
\end{aligned}
\right.
\label{update_rho}
\end{equation}
where $$F_{i+1/2}^n:=\frac{c}{|V|}\int_V\Big(\frac{1}{\varepsilon \Delta t}\int_{t^n}^{t^{n+1}}
vI(x_{i+1/2},v,t)\,dt\Big)\,dv,\quad \displaystyle q_i(v):=\frac{1}{\Delta x} \int_{x_{i-1/2}}^{x_{i+1/2}} q(x,v)dx.$$ 
After discretizing the velocity space by discrete-ordinate method, the numerical flux $F_{i-1/2}^n$ and source term are approximated by

\begin{equation*}
\begin{aligned}
F_{i-1/2}^n&\approx A\sum_{m=-N}^N \omega_m \mu_m(I_{i-1/2,m}^{n-} \mathbf{1}_{\mu_m>0}+I_{i-1/2,m}^{n+} \mathbf{1}_{\mu_m<0})+\frac{2D}{3\Delta x}(\psi_i^{n+1}-\psi_{i-1}^{n+1})\\
+&B\sum_{m=-N}^N\omega_m \mu_m^2(\delta_x I_{i-1,m}^n \mathbf{1}_{\mu_m>0}+\delta_x I_{i,m}^n \mathbf{1}_{\mu_m<0})+\abs{V}\varepsilon^2 \frac{C}{\sigma}\sum_{m\in V_m} \omega_m \mu_m q_{i,m},\\
\int_V q_i dv&\approx \sum_{m\in V_m}\omega_m q_{i,m}=\sum_{m\in V_m}\omega_m q_i(\mu_m).
\end{aligned}
\end{equation*}
Here $\mathbf{1}_{\mu_m<0}$, $\mathbf{1}_{\mu_m>0}$ are the characteristic functions, 
$\delta_x I_{i,m}^n$ are the approximations to $\partial_xI(x_i,\mu_m,t^n)$ by the slope limiters   and $I_{i-1/2,m}^{n\pm}$ are given by 
\begin{equation*}
I_{i-1/2,m}^{n-}=I_{i-1,m}^n+\frac{\Delta x}{2}\delta_x I_{i-1,m}^n,\quad I_{i-1/2,m}^{n+}=I_{i,m}^n-\frac{\Delta x}{2}\delta_x I_{i,m}^n.
\end{equation*}
The coefficients $A,B,C,D$ depend on $\varepsilon$ and $\Delta t$, which will be specified later.

After obtaining the macroscopic variables $\rho_i^{n+1},\psi_i^{n+1}$ from \eqref{update_rho}, $I_{i,m}^{n+1}$ are computed by

\begin{equation}
\begin{aligned}
  I_{i,m}^{n+1}&=I_{i,m}^n+\frac{\Delta t}{\Delta x}\left(\Phi^n_{i-1/2,m}-\Phi^n_{i+1/2,m}\right)+\frac{\sigma c\Delta t}{\varepsilon^2}\left(\frac{\psi_i^{n+1}}{|V|}-I_{i,m}^{n+1}\right)+c\Delta t q_{i,m},
\end{aligned}
\label{update_I}
\end{equation}

where $\Phi^n_{i-1/2,m}$ are given by
\begin{equation*}
\begin{aligned}
\Phi^n_{i-1/2,m}=& A \mu_m\left(I_{i-1/2,m}^{n-} \mathbf{1}_{\mu_m>0}+I_{i-1/2,m}^{n+} \mathbf{1}_{\mu_m<0}\right)+C\mu_m \psi_{i-1/2}^{n+1}\\
+& D \left(\mu_m^2 \delta_x \psi_{i-1/2}^{n+1,L}\mathbf{1}_{\mu_m>0}+\mu_m^2 \delta_x \psi_{i-1/2}^{n+1,R}\mathbf{1}_{\mu_m<0}\right)\\
+& B \left(\mu_m^2 \delta_x I_{i-1,m}^n \mathbf{1}_{\mu_m>0}+\mu_m^2 \delta_x I_{i,m}^n \mathbf{1}_{\mu_m<0}\right)\\
+& E \delta_t \psi_{i-1/2}^{n+1}
+ \frac{\abs{V}\varepsilon^2 C}{\sigma} \mu_m q_{i,m},\\
\end{aligned}
\end{equation*}

with $\psi_{i+1/2}^{n+1}$, $\delta_x \psi_{i-1/2}^{n+1,R}$, $\delta_x \psi_{i-1/2}^{n+1,L}$, $\delta_t \psi_{i-1/2}^{n+1}$ being defined by 
\begin{equation*}
\begin{aligned}
&\psi_{i+1/2}^{n+1}=\frac{1}{2}(\psi_i^{n+1}+\psi_{i+1}^{n+1}).
\qquad \delta_x \psi_{i-1/2}^{n+1,L}=\frac{\psi_{i-1/2}^{n+1}-\psi_{i-1}^{n+1}}{\Delta x/2},\\
& \delta_t \psi_{i-1/2}^{n+1}=\frac{\psi_{i-1/2}^{n+1}-\psi_{i-1/2}^{n}}{\Delta t}, \qquad \delta_x \psi_{i-1/2}^{n+1,R}=\frac{\psi_{i}^{n+1}-\psi_{i-1/2}^{n+1}}{\Delta x/2}.
\end{aligned}
\end{equation*}
$ \delta_x \psi_{i-1/2}^{n+1,L}$ and $\delta_x \psi_{i-1/2}^{n+1,R}$ are the same and we use different notations in order to be consistent with notations in \cite{Sun}.
In \cite{Sun}, different notations are used for $\delta_x \psi_{i-1/2}^{n+1,R}$ and $\delta_x \psi_{i-1/2}^{n+1,L}$ in order to consider different approximations for $\psi_{i-1/2}^{n+1}$. Some of them can lead to the case when $\delta_x \psi_{i-1/2}^{n+1,R}$ and $\delta_x \psi_{i-1/2}^{n+1,L}$ are different.

Let $\nu=\frac{c\sigma}{\varepsilon^2}$, the coefficients $A,B,C,D,E$ appeared in all above formula are given by
\begin{equation*}
\begin{aligned}
A &=\frac{c}{\varepsilon \Delta t \nu}\left(1-\exp{-(\nu \Delta t)}\right),\\
C &=\frac{c^2\sigma}{\abs{V}\varepsilon^3 \Delta t \nu}\left(\Delta t-\frac{1}{\nu}(1-\exp{(-\nu \Delta t)})\right),\\
D &=-\frac{c^3\sigma}{\abs{V}\varepsilon^4 \Delta t \nu^2}\left(\Delta t(1+\exp{(-\nu \Delta t)})-\frac{2}{\nu}(1-\exp{(-\nu \Delta t)})\right),\\
B&=-\frac{c^2}{\varepsilon^2\nu^2 \Delta t}\left(1-\exp{(-\nu \Delta t)}-\nu \Delta t \exp{(-\nu \Delta t)} \right),\\
E&=\frac{c^2\sigma}{\abs{V}\varepsilon^3\nu^3\Delta t}\left(1-\exp{(-\nu \Delta t)}-\nu \Delta t \exp{(-\nu \Delta t)}-\frac{1}{2} (\nu\Delta t)^2\right).
\end{aligned}
\end{equation*}

\subsection{Steady states for the radiative transport equation}
We use the discrete ordinate method similar as in subsection 3.3 and refer to \cite{Gosse2} for more details.
The one dimensional discrete ordinate steady state problem for \eqref{radiative_transportph} reads
%
\begin{equation}
\mu_m \partial_x I_m=\frac{\sigma}{\varepsilon}(\sum_m \omega_m I_m-I_m)+\varepsilon q_m.
\label{steady_problem_app}
\end{equation}
We use the same AP scheme as in subsection 3.3 and the above equation falls into the category of Case II.
The only difference is that there exists a source term $q_m$, which can be easily built into the scheme by constructing an approximated 
particular solution. The general solution with source term $q_m$ can be given by the summation of the general solution of the homogeneous problem and
the approximated particular solution. Then the coefficients in front of the general solutions of the homogeneous equation can 
be determined by the continuity of solution at the cell edge. For more details, we refer to \cite{Min}.

\section{Numerical simulations}
In this section, we apply the well-balanced and AP scheme designed in Section 2 to two test cases: kinetic chemotaxis and radiative transport models.
\subsection{Chemotaxis model}
The first simulation concerns the chemotaxis model \eqref{eq_chemotaxis_eps}.
Simulations are set on $x \in [-1,1]$ and $v \in [-1,1]$. We take $N_v=32$ points for the velocity  and $N_x=500$ points for the space discretization.
Parameters in \eqref{eq_chemotaxis_eps} are chosen as in \cite{Gosse} such that
\begin{equation*}
 \chi_S=1,D=15,\beta=60,\alpha=3.
\end{equation*}
and $\phi$ is of the form 
\begin{equation*}
 \phi(x)=-\chi_S \tanh{\frac{x}{\delta}},\qquad \mbox{with $\delta=1$}.
\end{equation*}

We impose specular boundary conditions for $f^{\varepsilon}$ and Dirichlet conditions for $S^{\varepsilon}$.
The initial condition is composed of two bumps in $x$ located at $\pm 0.65$ such that
\begin{equation*}
 f^0(x,v)=5\exp{\left(-10(x-0.65)^2-10(x+0.65)^2\right)} \exp{\left(-20(v-0.5)^2-20(v+0.5)^2\right)}.
\end{equation*}
To ensure the stability of the numerical scheme, the time step $\Delta t$ is chosen as follows
\begin{equation*}
\Delta t=\left\{\begin{aligned}\Delta x^2,\quad &\mbox{for $\varepsilon<\Delta x^2$,}\\
\epsilon\Delta x,\quad&\mbox{else}.\end{aligned}\right.
\end{equation*}
The requirement for the time step can be improved if the limiting discretization is implicit, the strategy of constructing an implicit limiting discretization has been 
discussed in \cite{Sun}. The goal of our present paper is to illustrate the idea of the AP-WB framework and thus we keep using the simple scheme.
\subsubsection{AP property}
In order to verify the AP property of our scheme, 
 we show that the convergence order of the scheme is independent of $\varepsilon$. In figure \ref{fig:convergence1}, we plot 
$e_{\Delta x}:=\|f_{\Delta x}(t)-f_{2\Delta x}(t)\|_1$ with respect to 
$\Delta x$ for different values of $\varepsilon$ and uniform convergence with respect to $\epsilon$ can be observed numerically. This guarantees the AP property.
\begin{figure}
 \centering
 \includegraphics[width=10cm]{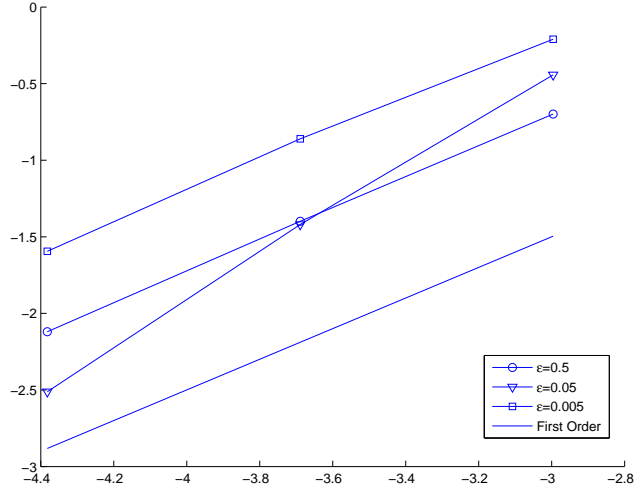}
 \caption{Convergence order with respect to $\Delta x$. Here the log-log plot of the mesh sizes with respect to the $L^1$ norm of the 
 probability density error in all directions is displayed. The mesh sizes are $1/20$, $1/40$, $1/80$, $1/160$ and 
 $e_{\Delta x}:=\max_{x\in(-1,1)}\|f_{\Delta x}(x,t)-f_{2\Delta x}(x,t)\|_1$ are plotted for different $\varepsilon$.}\label{fig:convergence1}
\end{figure}

\subsubsection{Model convergence in $\varepsilon$}
As predicted by the theoretical analysis,  when $\varepsilon \rightarrow 0$,
\begin{equation*}
f^{\varepsilon} \rightarrow \rho^0 \mathbf{1}_{v \in V},
\end{equation*}
where $\rho^0$ is the solution to the limiting Keller-Segel equation. 
To verify the above convergence, the total densities $\rho^\varepsilon$ are displayed in Figure~\ref{macro_rho_chemotaxis_AP}
for different values of $\varepsilon$ ranging from $10^{-1}$ 
to $10^{-6}$. As a comparison, we plot the solution of the limiting model in Figure~\ref{macro_rho_chemotaxis_AP}  and
can observe that $\rho^{\varepsilon}$ get closer to $\rho^0$ as $\varepsilon$ goes to zero.

The convergence order in $\varepsilon$ can be seen in Figure~\ref{log_error}, where we have plotted 
${\|f^{\varepsilon} (x,v)-\rho^{\varepsilon}(x) \mathbf{1}_{v \in V} \|}_2$ for different $\varepsilon$ in the logarithm scale. 
The numerical results indicate that the order of convergence is close to 1.
\begin{figure}[h]
\centering 
\includegraphics[width=10cm]{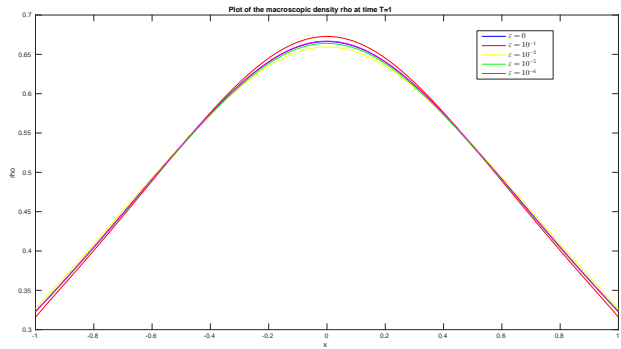}
\caption{Plot of the macroscopic density for different values of $\varepsilon$. $\varepsilon=0$ is the solution to the macroscopic Keller-Segel equation.}
\label{macro_rho_chemotaxis_AP}
\end{figure}

\begin{figure}[h]
\centering 
\includegraphics[width=10cm]{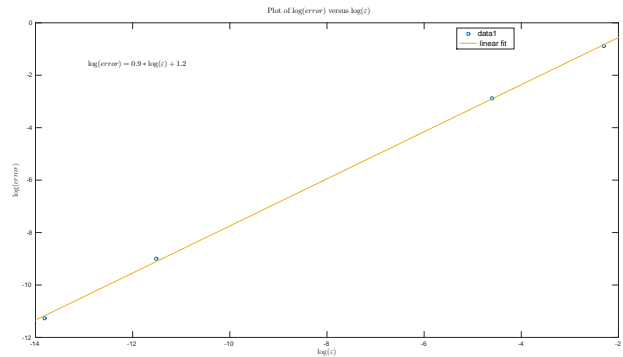}
\caption{Convergence order in $\varepsilon$ of the chemotaxi model \eqref{eq_chemotaxis_eps}.}
\label{log_error}
\end{figure}

\subsubsection{WB property}
In order to check the WB property, we calculate the long time behavior of the kinetic scheme for $\varepsilon=1$. The steady state is characterised by the vanishing of the macroscopic flux $J$. 
A way to measure whether the steady state is reached is to look at the evolution of the residues $r^n$ at a given time $t^n$ defined by 
\begin{equation*}
 r^n:=\left\|\sum_{m\in V} \omega_m \abs{f^{n+1}(\mu_m)-f^n(\mu_m)}\right\|_{2}.
\end{equation*}
Figure~\ref{residues_chemotaxis} shows the decrease of the residues and its stabilization when the steady state is reached. 
In addition, the flux $J$ is of order $\Delta x^2$ at this steady state as displayed in Figure~\ref{macroscopic_flux_chemotaxis}. 
\begin{figure}[h]
 \centering
    \includegraphics[width=10cm]{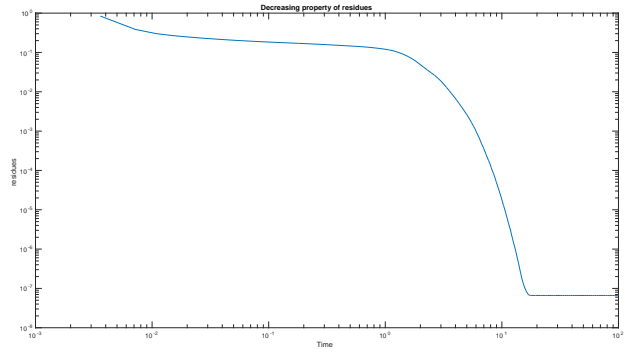}
    \caption{The evolution of residues with respect to time.}
    \label{residues_chemotaxis}
\end{figure}
\begin{figure}[h]
 \centering
  \includegraphics[width=10cm]{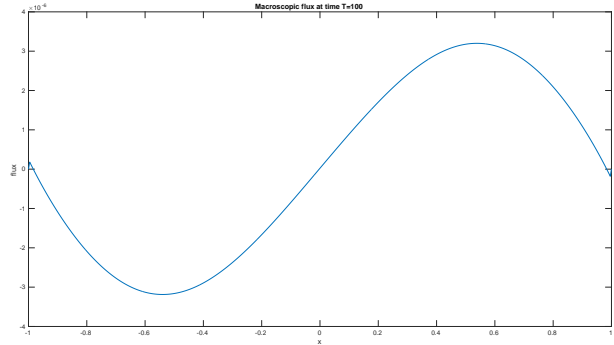}
 \caption{The macroscopic flux $J$ at time $T=100$.}
 \label{macroscopic_flux_chemotaxis}
\end{figure}

\subsection{Test case 2 : Radiative transport}
We consider the radiative transport equation \eqref{radiative_transport} on $x \in [0,1]$ and $v\in [-1,1]$.
The source term $q$ is given by 
\begin{equation}\label{eq:sourceq}
 q(x,v)=v\abs{v}+\sigma x\left(\abs{v}-\frac{1}{2}\right).
\end{equation}
When $\varepsilon=1$, we can have analytical nonzero steady state for this choice of $q(x,v)$. Though $q(x,v)$ in \eqref{eq:sourceq}
can take negative values, which is non-physical,
it can be used to test the scheme performance.
We use uniform grids in $x$ with $N_x=400$ and $32$ Gaussian quadrature for the velocity space.
The parameters are set to be
\begin{equation*}
 \sigma=1,a=0.01372,c=29.98,C_v=0.01.
\end{equation*}
For $I$, Dirichelet boundary condition on the left and specular reflection on the right are prescribed.
To guarantee the stability, the time step is given by 
\begin{equation*}
\Delta t=\left\{\begin{aligned}0.95\frac{\Delta x^2}{c} ,\quad&\mbox{for $\varepsilon<0.95\frac{\Delta x}{c}$},
\\0.95\varepsilon\frac{\Delta x}{c},\quad&\mbox{else}.\end{aligned}\right.
\end{equation*}
\subsubsection{AP property}
In this part, we show that the convergence order of the scheme is independent of $\varepsilon$. In Figure \ref{macro_rho_radiative_AP}, we plot 
$e_{\Delta x}:=\max_{x\in(0,1)}\|I_{\Delta x}(x,t)-I_{2\Delta x}(x,t)\|_1$ with respect to 
$\Delta x$ for different values of $\varepsilon$. Uniform convergence can be observed which confirms the AP property of the scheme.
\begin{figure}
 \centering
 \includegraphics[width=10cm]{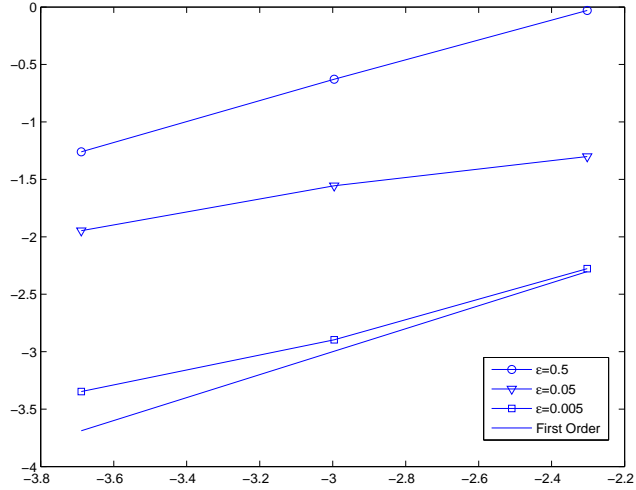}
 \caption{Convergence order with respect to $\Delta x$. Here the log-log plot of the mesh sizes with respect to the $L^1$ norm of the 
 probability density error in all directions is displayed. The mesh sizes are $1/10$, $1/20$, $1/40$, $1/80$ and 
 $e_{\Delta x}:=\max_{x\in(0,1)}\|I_{\Delta x}(x,t)-I_{2\Delta x}(x,t)\|_1$ are plotted for different $\varepsilon$.}
\end{figure}

\subsubsection{Model convergence with respect to $\varepsilon$}
 The theoretical analysis indicates that 
\begin{equation*}
I^{\varepsilon} \rightarrow \frac{\psi^0}{2} \mathbf{1}_{v \in V},\quad \varepsilon \rightarrow 0,
\end{equation*}
where $\psi^0=ac(T^0)^4$ and $T^0$ is the solution to the limiting equation \eqref{eq:limit_T}.
To verify the above convergence, the total densities $\rho^\varepsilon$ are displayed in Figure~\ref{macro_rho_radiative_AP}
for different values of $\varepsilon$ ranging from $10^{-1}$ 
to $10^{-6}$. As a comparison, we plot the solution of the limiting model in Figure~\ref{macro_rho_chemotaxis_AP}  and
can observe that $I^{\varepsilon}$ get closer to $\psi^0/2$ as $\varepsilon$ goes to zero.

The convergence order in $\varepsilon$ can be seen in Figure~\ref{log_error_radiative}, where we have plotted ${\|I^{\varepsilon} (x,v)-\rho^\varepsilon \mathbf{1}_{v \in V} \|}_2$, for different $\varepsilon$ in the logarithm scale. 
The numerical results indicate that the order of convergence is close to 0.6.

\begin{figure}[h]
\centering 
\includegraphics[width=10cm]{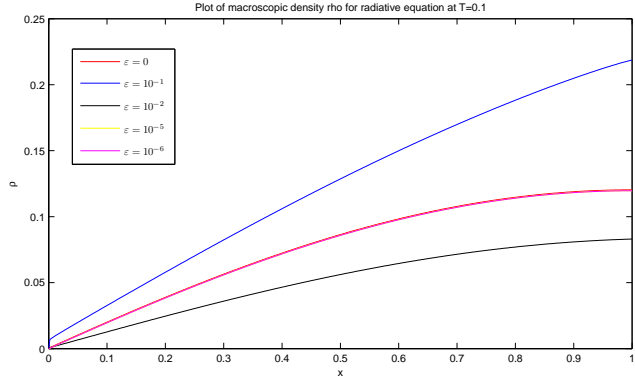}
\caption{Total density for different values of $\varepsilon$. $\varepsilon=0$ corresponds to the limiting nonlinear diffusion model \eqref{eq:limit_T}.}
\label{macro_rho_radiative_AP}
\end{figure}

\begin{figure}[h]
\centering 
\includegraphics[width=10cm]{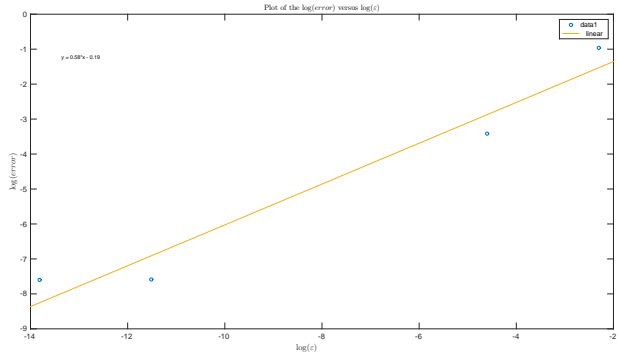}
\caption{Convergence order in $\varepsilon$ of the grey radiative transport model \eqref{radiative_transport}. }
\label{log_error_radiative}
\end{figure}
\subsubsection{WB property}
For the source term $q$ as in \eqref{eq:sourceq}, when $\varepsilon=1$, the steady state is given by 
\begin{equation*}
 I(x,v)=|v|x,\quad \rho=x.
\end{equation*}
The time evolution of the residues $r^n$ is given in Figure~\ref{residues_radiative}, it decrease and stabilize at $T=1$.
In Figure~\ref{rho_radiative}, the captured steady state is plotted.
\begin{figure}[h]
 \centering
    \includegraphics[width=10cm]{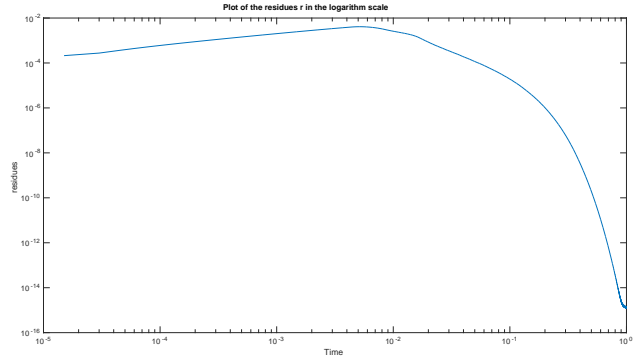}
    \caption{Evolution of residues with respect to time.}
    \label{residues_radiative}
\end{figure}
\begin{figure}[h]
 \centering
  \includegraphics[width=10cm]{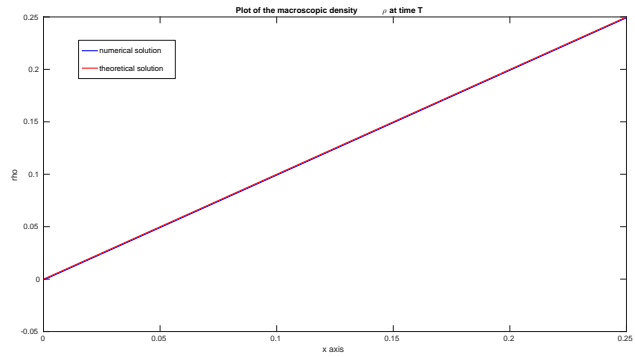}
 \caption{The total density of the steady state at time $T=1$ when $\varepsilon=1$.}
 \label{rho_radiative}
\end{figure}

\begin{section}{Conclusion}
A general framework of developing AP and WB schemes is proposed in this paper.
Two parameter regimes are considered:  the (advection) diffusion type (linear or nonlinear) limit when $\varepsilon<<1$ and
 the long time behavior of the hyperbolic equations with source terms when $\varepsilon=O(1)$.
The framework is composed of two steps. We first calculate a prediction with 
an AP scheme for the time evolutionary problem and then use the prediction to set up the inflow boundary conditions for the steady state problem 
in each cell. The steady state problem in each cell is solved by an AP scheme for the steady state equation, 
then the obtained outflows in each cell are used for modifying the numerical flux to achieve WB. 

The details of the discretizations
for transport equations for chemotaxis and gray radiative transfer are given. The numerical results in section 5
verify the AP and WB properties of our proposed scheme. In particular, the AP schemes for the time evolutionary transport equation for chemotaxis
and the corresponding steady state equation are new.

One may argue about the computational cost of our proposed framework, the advantage is that, any AP schemes for the time evolutionary problem
and steady state problem are applicable, thus we can choose those simple ones. First of all, the key principle underlying most WB 
schemes consists of using values constructed from a local discrete equilibrium, therefore if we are interested in unknown steady states,
it is hard to avoid the steady problem step. On the other hand, in the prediction step, we can use 
those AP scheme that can implement the implicit stiff terms explicitly as in \cite{Filbet}, or at least more efficient than using the Newton type solvers for
nonlinear algebraic systems.  \eqref{numerical_scheme} can be updated explicitly afterwards, the total computational cost is composed of
some explicit calculations plus
 solving the steady state problems.

The proposed framework can be extended to the hyperbolic system with relaxations, we will pursue more applications in the future.
\end{section}

\bibliographystyle{plain}

\end{document}